\date{}
\theoremstyle{plain}
      \newtheorem{theorem}{Theorem}[section]
      \newtheorem{lemma}[theorem]{Lemma}
      \newtheorem{corollary}[theorem]{Corollary}
\theoremstyle{definition}
      \newtheorem{definition}[theorem]{Definition}
\theoremstyle{remark}
\def\cn{\mbox{\rm cr}}
\title{On the number of edges of separated multigraphs}
\author{Jacob Fox\thanks{Stanford University, Stanford, CA. Supported by a Packard Fellowship and by NSF award DMS-1855635. Email: {\tt jacobfox@stanford.edu.}} \and J\'anos Pach\thanks{R\'enyi Institute, Budapest and MIPT, Moscow. Supported by NKFIH grants K-176529, KKP-133864, Austrian Science Fund Z 342-N31, Ministry of Education and Science of the Russian Federation MegaGrant No. 075-15-2019-1926, ERC Advanced Grant ``GeoScape.'' Email:
{\tt pach@cims.nyu.edu}.}\and  Andrew Suk\thanks{Department of Mathematics, University of California at San Diego, La Jolla, CA, 92093 USA. Supported an NSF CAREER award, NSF award DMS-1952786, and an Alfred Sloan Fellowship. Email: {\tt asuk@ucsd.edu}.} }
\begin{document}

\maketitle

\begin{abstract}
We prove that the number of edges of a multigraph $G$ with $n$ vertices is at most $O(n^2\log n)$, provided that any two edges cross at most once, parallel edges are noncrossing, and the lens enclosed by every pair of parallel edges in $G$ contains at least one vertex. As a consequence, we prove the following extension of the Crossing Lemma of Ajtai, Chv\'atal, Newborn, Szemer\'edi and Leighton, if $G$ has $e \geq 4n$ edges, in any drawing of $G$ with the above property, the number of crossings is $\Omega\left(\frac{e^3}{n^2\log(e/n)}\right)$. This answers a question of Kaufmann {\em et al.}~and is tight up to the logarithmic factor.
\end{abstract}

\section{Introduction}

A \emph{topological graph} is a graph drawn in the plane such that its vertices are represented by points, and the edges are represented by simple continuous arcs connecting the corresponding
pairs of points. In notation and terminology, we do not distinguish between the vertices and the points representing them and the edges and the arcs representing them. The edges are allowed to intersect, but they cannot pass through any vertex other than their endpoints. If two edges share an interior point, then they must properly \emph{cross} at that point, {\em i.e.}, one edge passes from one side of the other edge to its other side.

A {\em multigraph} is a graph in which two vertices can be joined by several edges. Two edges that join the same pair of vertices are called {\em parallel}.
\smallskip

According to the {\em crossing lemma} of Ajtai, Chv\'atal, Newborn, Szemer\'edi~\cite{ACNS82} and Leighton~\cite{L83}, every topological graph $G$ with $n$ vertices and $e>4n$ edges has at least $c\frac{e^3}{n^2}$ edge crossings, where $c>0$ is an absolute constant. In notation, we have

\begin{equation}\label{eq1}
\cn(G)\ge c\cdot \frac{e^3}{n^2}.
\end{equation}

In a seminal paper which was an important step towards the solution of Erd\H os's famous problem on distinct distances \cite{GuK15}, Sz\'ekely \cite{Sz97} generalized the crossing lemma to multigraphs: for every topological multigraph $G$ with $n$ vertices and $e>4n$ edges, in which the {\em multiplicity} of every edge is at most $m$, we have
\begin{equation}\label{eq2}
\cn(G)\ge c\frac{e^3}{mn^2}.
\end{equation}
As the maximum multiplicity $m$ increases, (\ref{eq2}) gets weaker. However, as was shown in \cite{PT} and~\cite{KPTU}, under certain special conditions on the multigraphs, the inequality (\ref{eq1}) remains true, independently of $m$. Some related results were established in~\cite{PTT}. In all of these papers, one of the key elements of the argument was to find an analogue of Euler's theorem for the corresponding classes of ``nearly planar'' multigraphs.
\smallskip

Throughout this paper, we consider only {\em single-crossing} topological multigraphs, {\em i.e.}, we assume that any two edges cross {\em at most once}.  Hence, two edges that share an endpoint may also have a common interior point.  Two edges are said to be \emph{independent} if they do not share an endpoint, and they are called \emph{disjoint} if they are independent and do not cross.

\begin{definition}
A multigraph $G$ is called {\em separated} if no two parallel edges of $G$ cross, and the ``lens" enclosed by them has at least one vertex in its interior.
\end{definition}

It was conjectured in~\cite{KPTU} that any separated single-crossing topological multigraph with $n$ vertices has at most $O(n^2)$ edges. The aim of this note is to verify this conjecture apart from a logarithmic factor.

\begin{theorem}\label{main}
The number of edges of a separated single-crossing topological multigraph $G$ on $n$ vertices satisfies  $|E(G)| \leq 64n^{2}\log n.$
\end{theorem}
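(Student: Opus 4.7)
The plan is to reduce bounding $|E(G)|$ to bounding the total number of \emph{lenses} in $G$, and then to bound the lens count by a per-vertex argument. Write $m(u,w)$ for the multiplicity of the pair $\{u,w\}$, $p$ for the number of pairs with $m(u,w) \ge 1$, and $L = \sum_{\{u,w\}} \max(m(u,w) - 1, 0)$ for the total number of lenses of $G$. Then $|E(G)| = p + L$, and since $p \le \binom{n}{2} \le n^2/2$, it suffices to show $L = O(n^2 \log n)$. For each vertex $v \in V(G)$, let $S(v)$ be the set of pairs $\{u,w\} \subseteq V(G) \setminus \{v\}$ such that some pair of parallel $u$-$w$ edges bounds a lens of $G$ containing $v$ in its interior. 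The lenses arising from a fixed pair of vertices are pairwise disjoint, so $v$ lies in at most one lens per pair in $S(v)$; since every lens has at least one interior vertex by the separated hypothesis, $L \le \sum_v |S(v)|$. Thus Theorem~\ref{main} will follow from the per-vertex estimate $|S(v)| \le O(n \log n)$.

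To prove the per-vertex bound, I would proceed by induction on $n$. Fix $v$ and, for each $\{u,w\} \in S(v)$, consider the closed Jordan curve $\gamma_{uw}$ formed by the two parallel $u$-$w$ arcs bounding a lens around $v$. Each $\gamma_{uw}$ encloses $v$ in its interior, and by the single-crossing condition any two such curves share at most four crossing points (four pairs of arcs, each crossing at most once). The plan is a divide-and-conquer on $V(G) \setminus \{v\}$: partition this set into two halves $V_1, V_2$ of size at most $n/2$, and separately bound the pairs in $S(v)$ with both endpoints in the same $V_i$ (recursively, after passing to the restriction $G[V_i \cup \{v\}]$ and repairing any newly empty lenses by edge deletions, which only shrinks $S(v)$) and the pairs with one endpoint in each $V_i$ (by an absolute term of order $n$). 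This produces the recursion $T(n) \le 2T(n/2) + O(n)$, which solves to $T(n) = O(n \log n)$.

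The main obstacle is the linear bound on \emph{crossing} pairs. Each crossing pair $\{u,w\} \in S(v)$ with $u \in V_1, w \in V_2$ contributes a Jordan curve around $v$ whose two arcs connect $V_1$ to $V_2$; the single-crossing condition, together with the topology of the plane and the fact that all of these curves enclose the common point $v$, should force them to admit a planar-like combinatorial structure on the endpoint pairs, after which an Euler-type count yields the linear bound. Carrying out this structural lemma precisely, and then tracking the constants carefully through the recursion and the reduction $|E(G)| = p + L$ so as to obtain the explicit bound $64 n^2 \log n$, is the most technically delicate part of the argument. This structural lemma, rather than the reduction to lens counting, is the true source of the logarithmic factor that separates our bound from the conjectured $O(n^2)$ of Kaufmann et al.
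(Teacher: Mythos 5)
Your reduction $|E(G)| = p + L$ and $L \le \sum_v |S(v)|$ is correct, but the per-vertex bound you reduce everything to, $|S(v)| = O(n\log n)$, is \emph{false}, so the plan cannot be carried out. Here is a counterexample. Place $v$ at the origin, and put $x_i$ at the point $i$ and $y_j$ at the point $-j$ of the real axis, for $1 \le i,j \le m$. For every pair $(x_i,y_j)$ draw two parallel edges: the upper and the lower semicircle with diameter $[-j,i]$. Any two of these diameters contain the origin, so they are either nested or interleaving. Two upper semicircles with nested diameters are disjoint (or internally tangent at a common endpoint, which is a shared vertex), while two with interleaving diameters cross exactly once in the open upper half-plane; the same holds for lower semicircles, and an upper and a lower semicircle can meet only at common endpoints. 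Hence this multigraph on $n = 2m+1$ vertices is single-crossing, no two parallel edges cross, and each of the $m^2$ lenses is an open disk containing the origin, so the multigraph is separated and $|S(v)| = m^2 = \Theta(n^2)$. This does not contradict Theorem~\ref{main} (the construction has only $\Theta(n^2)$ edges), but it shows that the number of vertex pairs whose lens contains a fixed vertex cannot be bounded by $O(n\log n)$, so no divide-and-conquer on $S(v)$ alone can give the theorem.

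The counterexample also pinpoints the obstruction that your unproved ``structural lemma'' glosses over: two lens boundaries enclosing the same point $v$ need not cross --- one may be nested inside the other (as happens massively above) --- so the family of curves in $S(v)$ admits no thrackle- or planarity-type bound in general. The paper's proof is organized precisely around this issue. It buckets the lenses through a well-chosen vertex according to the dyadic size of their interiors, then randomly deletes vertices with probability $1-2^{-k}$ so that a positive fraction of the surviving lenses become \emph{empty}; empty lenses through a common point cannot be nested (an enclosing lens would contain the endpoints of the enclosed one), hence their chosen boundary edges pairwise cross whenever independent, and Lemma~\ref{thrackle} (a Hanani--Tutte/bipartite-planarity argument) bounds their number linearly. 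The logarithmic factor comes from the dyadic pigeonholing over lens sizes, not from a recursion on the vertex set. Any repair of your approach would have to make the per-vertex count sensitive to the number of vertices inside each lens, at which point you are essentially reconstructing the paper's argument.
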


Note that in a separated multigraph, any pair of vertices can be connected by at most $n-1$ edges. This immediately implies the bound $|E(G)|\le {n\choose 2}(n-1)=O(n^3)$.

If we plug in Theorem~\ref{main} into the machinery of~\cite{PT} and \cite{KPTU}, a routine calculation gives the following.

\begin{corollary}\label{cor}\label{crossinglem}
Every separated single-crossing topological multigraph on $n$ vertices and $e \geq 4n$ edges has at least
$10^{-25}\frac{e^3}{n^2\log(e/n)}$ crossings.
\end{corollary}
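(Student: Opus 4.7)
The plan is to derive Corollary~\ref{cor} from Theorem~\ref{main} by the probabilistic dilution argument that underlies the proof of the classical Crossing Lemma, adapted to topological multigraphs as in \cite{PT} and \cite{KPTU}, with Theorem~\ref{main} playing the role of Euler's formula as the ``base case'' edge bound. Let $G$ be a separated single-crossing topological multigraph on $n$ vertices with $e\ge 4n$ edges and $x=\cn(G)$ crossings.

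The first step is to establish the following Euler-type inequality: $|E(G)|\le 64n^{2}\log n+\cn(G)$. This follows from Theorem~\ref{main} by removing one edge from each crossing pair. The resulting drawing is planar, and it is still separated: only edges were removed, so every parallel lens retains its original witness vertex. Theorem~\ref{main} applied to this planar separated sub-multigraph yields $|E(G)|-\cn(G)\le 64n^{2}\log n$.

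The second step is to sample. Keep each vertex of $G$ independently with probability $p\in(0,1]$ to obtain a random induced sub-multigraph $G'$. The single-crossing property is inherited automatically, but the separated property may fail because the designated witness inside a lens might have been deleted. I would therefore prune $G'$ to a separated sub-multigraph $G''$ by removing, within each parallel class, a minimum set of edges so that every surviving pair of adjacent parallel edges encloses a lens containing a vertex of $V(G')$. An averaging computation, using that each lens witness of $G$ survives the sampling with probability $p$, gives $\mathbb{E}[|E(G'')|]\gtrsim p^{3}e$ (with an extra $p^{2}N$ contribution from the number $N$ of parallel classes). Combined with $\mathbb{E}[\cn(G'')]\le p^{4}x$ and the concentration estimate $\mathbb{E}[|V(G')|^{2}\log|V(G')|]\lesssim (pn)^{2}\log(pn)$ for $pn\gtrsim 1$, applying the Euler-type inequality to $G''$ and taking expectations yields
\[
p^{3}e \;\lesssim\; 64\,(pn)^{2}\log(pn)+p^{4}x.
\]

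The third step is to optimize. Rearranging and choosing $p$ so that $pn$ is of order $e/n$, which makes $\log(pn)\asymp\log(e/n)$, balances the two terms on the right-hand side and yields $x\gtrsim e^{3}/(n^{2}\log(e/n))$; tracking absolute constants through the pruning and concentration steps produces the explicit factor $10^{-25}$.

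The main obstacle is the pruning step $G'\to G''$: the separated condition is not monotone under vertex deletion, and one must argue, via a careful bookkeeping of lens witnesses across parallel classes, that the expected edge loss is absorbed into the implicit constant and does not change the order of growth. The $\log(e/n)$ factor in the denominator of the final bound (compared with the classical Crossing Lemma) is inherited directly from the $\log n$ factor in Theorem~\ref{main}, evaluated at the optimal sample size $pn\asymp e/n$; this is precisely the ``routine calculation'' referenced in the statement.
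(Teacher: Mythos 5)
Your overall architecture (Euler-type inequality from Theorem~\ref{main}, then random sampling, then optimization over $p$) is the probabilistic deletion method, and it does not yield the claimed bound here; the paper instead uses the bisection-width recursion of Lemma~\ref{bwcrossing} (following \cite{PT}, \cite{PST}). The fundamental problem is that the deletion method only amplifies a base bound that is \emph{linear} in the number of vertices. Your Euler-type inequality is quadratic: $e_H\le 64 n_H^2\log n_H+\cn(H)$. Taking expectations as you propose gives
\[
p^{3}e\;\le\;64\,\EE\bigl[n_H^{2}\log n_H\bigr]+\EE[\cn(H)]\;\lesssim\;64\,(pn)^{2}\log(pn)+p^{4}x,
\qquad\text{hence}\qquad
x\;\gtrsim\;\frac{e}{p}-\frac{64\,n^{2}\log(pn)}{p^{2}}.
\]
For the right-hand side to be positive you need $p\gtrsim n^{2}\log(pn)/e$, and then the best you can extract is $x\gtrsim e/p\lesssim e^{2}/\bigl(n^{2}\log(pn)\bigr)$ --- short of the target $e^{3}/\bigl(n^{2}\log(e/n)\bigr)$ by a full factor of $e$. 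Worse, the constraint $p\le 1$ forces $e\gtrsim n^{2}$, so for the bulk of the range $4n\le e\ll n^{2}$ the method gives nothing whatsoever: the sampled graph has in expectation fewer edges than the quadratic Euler bound allows even with zero crossings. (Try $e=n^{1.5}$: $p^{3}e\ge 64(pn)^{2}\log(pn)$ has no solution $p\le1$, yet the corollary asserts $x\gtrsim n^{2.5}/\log n$.) Your specific choice $pn\asymp e/n$, i.e.\ $p=e/n^{2}$, gives at best $x\gtrsim e/p=n^{2}$, and only when $e\gtrsim n^{2}\sqrt{\log n}$. No choice of $p$ repairs this; the $n^{2}$ in the base bound does not shrink relative to $e$ as $p\to0$ because both terms carry the same power of $p$ after the edge-survival factor is divided out.

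Two smaller issues: a crossing between edges sharing an endpoint (which the single-crossing model explicitly permits) survives vertex sampling with probability $p^{3}$, not $p^{4}$, so $\EE[\cn(G'')]\le p^{4}x$ is false as stated and the adjacent-crossing term enters at the same order $p^{3}$ as the edge count; and the drawing is fixed, so you cannot redraw to eliminate adjacent crossings without possibly destroying the separated/single-crossing structure. Your Step 1 and your pruning analysis of the lens witnesses (one free edge per parallel class, plus $p$ per surviving lens, giving $\EE[|E(G'')|]\ge p^{3}e$) are both correct, but they feed into a framework that cannot reach the stated exponent. The paper's route is to split high-degree vertices, recursively bisect using $b(H)\le 22\sqrt{c(H)+\sum d_i^2+n}$ until each piece either already has $c(H)\ge ke(H)$ crossings per edge with $k\asymp e^{2}/(n^{2}\log(e/n))$ or has so few vertices that Theorem~\ref{main} makes its edge contribution negligible; this divide-and-conquer scheme is the standard substitute for the deletion method precisely when the base bound is superlinear.
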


For simplicity, we will assume that a multigraph does not have \emph{loops}.  It is easy to see that Theorem 1 also holds for topological multigraphs  with loops, assuming that each loop contains a vertex.

Theorem~\ref{main} does not remain true if we replace the assumption that $G$ is {\em single-crossing} by the weaker one that any two edges cross at most {\em twice}. To see this, let the vertices of $G$ lie on the $x$-axis: set $V(G)=\{1,2,\ldots,n\}$. Let each edge consist of a semicircle in the upper half-plane and a semicircle below it that meet at a point of the $x$-axis. More precisely, for any pair of integers $i,j\in V(G)$ with $i<j$, and for any $k$ with $i\le k<j,$ pick a distinct point $p_{ikj}$ in the open interval $(k,k+1)$. Let $\gamma_{ikj}$ be the union of two semicircles centered at the $x$-axis: an upper semicircle connecting $i$ to $p_{ikj}$ and a lower one connecting $p_{ikj}$ to $j$. Let $E(G)$ consist of all arcs $\gamma_{ikj}$ over all triples $i\le k<j.$ Observe that any two edges of $G$ cross at most twice: once above the $x$-axis and once below it. No two parallel edges, $\gamma_{ihj}$ and $\gamma_{ikj}$ with $h<k$, cross each other, and the region enclosed by them contains the vertex $k\in V(G)$. Therefore, $G$ is a separated topological multigraph with $\sum_{i,j (i<j)}(j-i)=\Omega(n^3)$.
\smallskip

The proof of Theorem~\ref{main} is presented in the next section, and the proof of Corollary \ref{cor} can be found in the subsequent section.

All logarithms used in the sequel are of base 2. We omit all floor and ceiling signs wherever they are not crucially important.

\section{Proof of Theorem \ref{main}}

We will need the following simple lemma.

\begin{lemma}\label{thrackle}
Let $G$ be a single-crossing topological graph on $n$ vertices with no parallel edges, in which every pair of independent edges cross. Then we have $|E(G)|\leq 4n$.
\end{lemma}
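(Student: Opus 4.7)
The plan is to prove Lemma~\ref{thrackle} by induction on $n$. For the base case ($n$ at most some small constant), the bound is trivial since $\binom{n}{2} \leq 4n$. For the inductive step, I would aim to show that $G$ contains a vertex $v$ of degree at most $4$. If so, then $G - v$ is a topological graph on $n-1$ vertices inheriting every hypothesis of the lemma (single-crossing, no parallel edges, and every pair of independent edges crosses), so by induction $|E(G-v)| \leq 4(n-1)$, giving
\[
|E(G)| \;\leq\; \deg(v) + |E(G-v)| \;\leq\; 4 + 4(n-1) \;=\; 4n.
\]

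Everything then reduces to the structural claim that $G$ has minimum degree at most $4$. I would try this by contradiction, assuming every vertex has degree at least $5$, so in particular $e := |E(G)| \geq 5n/2$. The plan is to combine two sources of information. First, an Euler-formula argument on the planarization $G^\times$ (insert a new vertex at each of the $c$ crossings, splitting edges accordingly) gives a planar topological graph with $n + c$ vertices and $e + 2c$ edges, hence
\[
e \;\leq\; 3n + c - 6.
\]
Second, the hypothesis that every pair of independent edges crosses (exactly once, since the drawing is single-crossing) produces the lower bound
\[
c \;\geq\; \binom{e}{2} - \sum_{v \in V(G)} \binom{d_v}{2},
\]
where the right side counts all independent pairs of edges. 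Convexity of $\binom{x}{2}$ together with the assumption $\delta(G) \geq 5$ should then pin down $\sum_v \binom{d_v}{2}$ well enough to combine with Euler and force a contradiction.

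The main obstacle is that the naive combination of Euler's formula with the crossing count is too weak: since $c$ may be as large as $\binom{e}{2}$, the Euler bound $e \leq 3n + c - 6$ alone yields nothing linear. To extract the constant $4$, I expect the proof to require a sharper face-length analysis of $G^\times$ — specifically, showing that many of its faces have length at least $4$ rather than $3$, perhaps using that $G$ has no parallel edges so no face of $G^\times$ can be bounded by two edge-segments with the same pair of $G$-endpoints at its corners. Alternatively, the argument may work more directly by picking an edge with the fewest crossings and analyzing the local topological picture (the cyclic order at its endpoints together with the way independent edges must traverse it). Either way, the delicate step is translating the global combinatorial condition ``every independent pair crosses'' into a local bound that forces a low-degree vertex.
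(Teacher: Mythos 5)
Your proposal does not contain a proof; it contains a reduction to an unproven claim, and the claim is the whole difficulty. You reduce the lemma (via induction and degeneracy) to the assertion that every graph satisfying the hypotheses has a vertex of degree at most $4$. This assertion is not established anywhere in your write-up, and you candidly acknowledge that both tools you propose for it fail: the Euler bound on the planarization, $e \leq 3n + c - 6$, only gets \emph{weaker} as $c$ grows, while the hypothesis ``every independent pair crosses'' forces $c$ to be \emph{large} ($c \geq \binom{e}{2} - \sum_v \binom{d_v}{2}$), so the two inequalities pull in the same direction and combine to give nothing. Note also that the minimum-degree claim is not a consequence of the lemma itself (the lemma only gives average degree at most $8$, hence some vertex of degree at most $8$); for the induction you would need $\delta(H) \leq 4$ for every induced subgraph $H$, which is a genuinely stronger local statement that the global crossing hypothesis does not obviously yield. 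The fallback ideas you sketch (face-length analysis of the planarization, picking an edge with fewest crossings) are not developed to the point where one could check them, so the proof is incomplete at its central step.

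For comparison, the paper's argument is short and entirely different in character: take a bipartition $V = A \cup B$ carrying at least half the edges, deform the plane so that $A$ and $B$ lie on opposite sides of a horizontal line, and ``flip'' one half-plane. This changes the parity of the number of crossings between any two independent edges of the bipartite part from odd (each such pair crosses exactly once, by hypothesis and the single-crossing assumption) to even. The Hanani--Tutte theorem then says the bipartite part is planar as an abstract graph, so it has at most $2n-4$ edges, and doubling gives $|E(G)| \leq 4n - 8$. If you want to salvage your approach, you would essentially need to rediscover an argument of this strength to certify the low-degree vertex; I would recommend instead learning the Hanani--Tutte route, which converts the global condition ``all independent pairs cross oddly'' directly into planarity without any local structural analysis.
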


\begin{proof}
Let $V(G)=A\cup B$ be a bipartition of the vertex set such that at least half of the edges of $G$ run between $A$ and $B$. Denote the corresponding bipartite graph by $G(A,B)$. Any pair of independent edges of $G(A,B)$ cross once, that is, an {\em odd} number of times. Assume without loss of generality that $A$ and $B$ are separated by a horizontal line. By ``flipping'' one of the half-planes bounded by this line from left to right, we obtain a drawing of $G(A,B)$, in which any pair of independent edges cross an {\em even} number of times. According to the Hanani-Tutte theorem~\cite{Tu70,Sch13}, this implies that $G(A,B)$ is a planar graph. Any bipartite planar graph on $n\ge 3$ vertices has at most $2n-4$ edges. Therefore, $|E(G)|\le 2|E(G(A,B))|\le 4n-8.$
\end{proof}

\begin{proof}[Proof of Theorem \ref{main}]  Let $G  = (V,E)$ be a separated single-crossing topological multigraph on $n$ vertices. If two vertices, $u$ and $v$, are joined by $j>1$ parallel edges, then they cut the plane into $j$ pieces, one of which is unbounded. The bounded pieces are called {\em lenses}. Each lens is bounded by two adjacent edges joining a pair of vertices. Let $L$ denote the set of lenses determined by $G$.

\smallskip

If $|L| \leq {|E(G)|\over 2}$, then keeping only one edge between every pair of adjacent vertices, we obtain a simple graph $G'$ whose number of edges satisfies
$${|E(G)|\over 2}\le |E(G')|\le{n\choose 2}.$$
This implies that $|E(G)|<n^2,$ and we are done.

From now on, we can and will assume that
\begin{equation}\label{one}
|L| \geq {|E(G)|\over 2}.
\end{equation}

For any lens $\ell \in L$, let $|\ell|$ denote the number of vertices in the interior of $\ell$.  For $t = \log n$, we partition $L$ into $t$ parts, $L_1\cup L_2\cup \cdots \cup L_{t}$, where $\ell \in L_i$ if and only of $2^{i-1} \leq |\ell| < 2^{i}$.  By the pigeonhole principle, there is an integer $k, 1\le k\le t,$ such that

\begin{equation}\label{two}|L_k| \geq {|L|\over\log n}.
\end{equation}

Fix an integer $k$ with the above property, and let $d_k(v)$ denote the number of lenses in $L_k$ that contain vertex $v$ in its interior.  Then we have

$$\sum\limits_{v \in V}d_{k}(v) = \sum\limits_{\ell \in L_k}|\ell| \geq |L_k|2^{k-1}.$$

Hence, there is a vertex $v \in V$ that lies in the interior of at least $|L_k|{2^{k-1}\over n}$ lenses from $L_k$.  Assume without loss of generality that $v$ is located at the origin $o$, and let $L^o$ denote the set of lenses in $L_k$ which contain the origin.  Hence, we have
$$|L^o| \geq |L_k|\cdot {2^{k-1}\over n}.$$
Combining this with (\ref{one}) and (\ref{two}), we obtain
\begin{equation}\label{three}
|L^o| \geq \frac{|E(G)|}{n\log n}\cdot 2^{k-2}.
\end{equation}

Let $G^o$ denote the subgraph of $G$ consisting of all vertices and the edges that bound a lens in $L^o$. Any two vertices of $G^o$ are connected by 0 or 2 edges of $G^o$.

\smallskip

Now we use the idea of the probabilistic proof of the crossing lemma; see \cite{Ma02}. Let $W$ be a random subset of $V$ in which each vertex is picked independently with probability $p=2^{-k}$.  Let $G^o[W]$ be the subgraph of $G^o$ induced by $W$. Let $L^o(W)$ denote the set of {\em empty} lenses in $G^o[W]$ (that is, the set of lenses with empty interiors).  For the expected values of $|W|$ and $|L^o(W)|$, we have

$$\mathbb{E}[|W|]  = pn$$
and
$$\mathbb{E}[|L^o(W)|] \ge p^2(1-p)^{2^k}|L^o|.$$

By linearity of expectation, there is a subset $W'$ of $V$ such that

\begin{equation}\label{four}|L^o(W')|-4|W'| \geq \mathbb{E}[|L^o(W)|] -4\mathbb{E}[|W|] \geq p^2(1-p)^{2^k}|L^o|-4pn.\end{equation}


For each lens in $\ell \in L^o(W')$, we arbitrarily pick one of the two edges enclosing $\ell$, and denote the resulting simple topological graph by $G'$.  We now make the following observation.

\begin{lemma}\label{geom}
Any two independent edges of $G'$ cross each other.
\end{lemma}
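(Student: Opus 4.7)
The plan is to show that if two independent edges $e_1,e_2$ of $G'$ were disjoint, we would reach a topological contradiction. Each edge $e_i$ of $G'$ is one of the two boundary arcs of an empty lens $\ell_i\in L^o(W')$; call the other boundary arc $e_i^*$. Since parallel edges do not cross in a separated multigraph, $\gamma_i:=e_i\cup e_i^*$ is a Jordan curve whose interior equals $\ell_i$ and contains the origin $o$.

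My first step would be to pin down where the endpoints sit. Let $u_i,v_i$ denote the endpoints of $e_i$. Independence of $e_1,e_2$ gives $\{u_1,v_1\}\cap\{u_2,v_2\}=\emptyset$, so $u_1,v_1$ are vertices of $G^o[W']$ that do not lie on $\gamma_2$. Because $\ell_2$ is empty of $W'$-vertices, $u_1$ and $v_1$ must lie strictly in the exterior of $\gamma_2$; by symmetry the same holds for $u_2,v_2$ relative to $\gamma_1$.

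Next I would assume for contradiction that $e_1\cap e_2=\emptyset$ and use two recurring principles: (i) an arc with both endpoints in the exterior of a Jordan curve meets that curve an even number of times, and (ii) by the single-crossing hypothesis any two edges of $G$ meet in at most one point. Counting, $e_2\cap\gamma_1=(e_2\cap e_1)\cup(e_2\cap e_1^*)$: the first piece is empty by assumption and the second has size at most one, so evenness forces $e_2\cap\gamma_1=\emptyset$, i.e.\ $e_2$ lies entirely in the exterior of $\gamma_1$. Since $o$ lies inside both $\gamma_1$ and $\gamma_2$, the two curves cannot be disjoint: if $\gamma_1\subset\ell_2$ then $u_1,v_1\in\ell_2$, and if $\gamma_2\subset\ell_1$ then $u_2,v_2\in\ell_1$, each contradicting emptiness. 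Hence $\gamma_2$ meets $\gamma_1$, and since $e_2$ does not, $e_2^*$ must; evenness plus single-crossing then pin this down to exactly one crossing with $e_1$ and one with $e_1^*$.

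The final step is the punchline: the arc $e_1$ now meets $\gamma_2=e_2\cup e_2^*$ in $0+1=1$ point, yet its endpoints $u_1,v_1$ lie in the exterior of $\gamma_2$, so the parity rule demands an even number of crossings, a contradiction. Therefore $e_1\cap e_2\neq\emptyset$; since $e_1,e_2$ are independent, any point of intersection is interior to both edges, hence a proper crossing. The step I expect to need the most care is ruling out the nested/disjoint configuration of $\gamma_1$ and $\gamma_2$, since that is the one place where the full strength of ``empty lens in $L^o(W')$''—namely, that both vertex pairs lie outside the other lens—has to be invoked; the rest is parity bookkeeping driven by the single-crossing property.
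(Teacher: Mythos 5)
Your proposal is correct and follows essentially the same route as the paper's proof: assume the two edges are disjoint, note that emptiness of each lens forces the other edge's endpoints into its exterior, use the common point $o$ to rule out disjoint/nested lens boundaries, and then let parity (Jordan curve) arguments combined with the single-crossing hypothesis force the contradiction. The paper states the final step tersely as ``both sides of $\ell$ must cross both sides of $\ell'$,'' and your write-up simply supplies the parity bookkeeping behind that assertion.
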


\begin{proof}
Suppose, for contradiction, that $G'$ has two independent edges, $e$ and $e'$, which do not cross.  Let $\ell$ and $\ell'$ be the corresponding empty lenses in $G^o[W']$. Since the interiors of $\ell$ and $\ell'$ are empty, neither of them can contain an endpoint of the other. Both of these lenses contain the origin $o$, which implies that they cannot be disjoint. Therefore, both sides of $\ell$ must cross both sides of $\ell'$, contradicting the choice of $e$ and $e'$. Here, we used the assumption that $G$ and, hence, $G'$ are single-crossing. \end{proof}

In view of Lemma~\ref{geom}, we can apply Lemma \ref{thrackle} to $G'$. We obtain $|E(G')|=|L^o(W')|\le 4|W'|$ and hence by (\ref{four}) we have $p^2(1-p)^{2^k}|L^o| \leq 4pn$.
It follows that
$$|L^o| \leq 4p^{-1}(1 - p)^{-2^k}n.$$

\noindent Substituting $p = 2^{-k}$, we get

$$|L^o| \leq 16\cdot 2^kn.$$

\noindent Comparing this with (\ref{three}), we conclude that

$$|E(G)| \leq 64n^2\log n.$$
This completes the proof of Theorem~\ref{main}.
\end{proof}

\section{Proof of Corollary \ref{crossinglem}}

The \emph{bisection width} of a graph is defined as the minimum number of edges whose deletion separates the graph into parts each containing at most a fraction $4/5$ of the vertices. For our purposes, the constant $4/5$ is not so important, and any positive constant less than $1$ will do. A bound on the bisection width of a graph in terms of the crossing number and the degree sequence of the graph was established by Pach, Shahrokhi, and Szegedy \cite{PSS} through an application of the Lipton-Tarjan separator theorem \cite{LT}. It was applied to obtain a new proof and various generalizations of the crossing lemma \cite{PST}, \cite{BCSV}, \cite{KPTU}.

The proof of Corollary \ref{crossinglem} follows the same general strategy as the new proof of the crossing lemma in \cite{PST}, although, importantly, we need to work with a topological variant of the bisection width, introduced in \cite{PT}. Given a separated single-crossing topological graph $G=(V,E)$, let $b(G)$ denote the minimum number of edges we need to delete from $G$ so that there is a vertex partition $V=V_1 \cup V_2$ such that

(i) $|V_1|,|V_2| \leq 4|V(G)|/5$, 

(ii) there is no remaining edge between $V_1$ and $V_2$, and 

(iii) for $i=1,2$, the subgraph $G_i\subset G[V_i]$ formed by the remaining edges of $G[V_i]$ is a separated single-crossing topological graph. 

Our proof of Corollary \ref{crossinglem} is based on the following statement.

\begin{lemma}\label{bwcrossing}
Let $G$ be a separated single-crossing topological graph on $n$ vertices with degree sequence $d_1,\ldots,d_n$ and $c(G)$ crossings. Then
$$b(G) \leq 22\sqrt{c(G)+\sum_{i=1}^n d_i^2 + n}.$$

\vspace{-1.2cm} \hfill $\Box$

\end{lemma}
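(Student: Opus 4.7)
The plan is to adapt the Pach--Shahrokhi--Szegedy (PSS) proof of the standard bisection width bound \cite{PSS} to the separated single-crossing topological setting, following the blueprint for ``topological bisection'' introduced in \cite{PT}. The three standard ingredients are a planarization of $G$, a weighted application of the Lipton--Tarjan separator theorem \cite{LT}, and a careful bookkeeping of deleted edges. What is new here is an additional ``repair'' step enforcing condition (iii) of the topological bisection width.

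First I would planarize $G$ by replacing each crossing with a new degree-$4$ vertex, producing a planar graph $H$ on vertex set $V(G)\cup X$, where $|X|=c(G)$. I assign weights: each $v\in V(G)$ receives weight $d_v^2+1$ and each $x\in X$ receives weight $1$, so the total weight is $W=\sum_{i=1}^n d_i^2+n+c(G)$. Then I apply the weighted Lipton--Tarjan separator theorem to $H$ to get a vertex set $S$ with $|S|\leq c_0\sqrt{W}$ whose removal splits $H$ into two parts of weight at most $4W/5$ each. I translate $S$ back to edges of $G$: for each crossing vertex in $S$ I delete the two edges of $G$ passing through it, and for each original vertex $v\in V(G)\cap S$ I delete its incident edges and assign $v$ to one of the sides. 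A Cauchy--Schwarz estimate bounds the total number of deleted edges by $O(\sqrt{W})$, and produces a balanced vertex partition $V=V_1\cup V_2$ with no surviving edge between $V_1$ and $V_2$. This gives conditions (i) and (ii).

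The subgraphs $G_i=G[V_i]$ automatically inherit the single-crossing condition and have no crossing parallel edges, so only one aspect of (iii) can fail: a surviving lens in $G_i$ whose (unique) witness vertex lies in $V_{3-i}$. Call such a lens \emph{orphaned}. For each orphaned lens I delete one of its two bounding parallel edges, which restores the separated property. To bound the number of orphaned lenses, I charge each one to an element of the separator $S$: if a lens $\ell$ is orphaned in $G_1$, then its witness $w_\ell\in V_2$ lies in the interior of $\ell$, and any curve in the plane from $w_\ell$ to the boundary of $\ell$ (whose endpoints are in $V_1$) corresponds to a path in $H$ that must hit $S$. Using that $G$ is single-crossing and separated, distinct lenses between a common pair of bounding vertices are pairwise disjoint, and the single-crossing condition severely limits how lenses from different vertex pairs can be nested inside one another. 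One then shows by a planar charging argument that each separator element is charged $O(1)$ times, yielding $O(\sqrt{W})$ orphaned lenses and hence $O(\sqrt{W})$ additional edge deletions.

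Summing the two contributions gives $b(G)\leq O(\sqrt{c(G)+\sum_i d_i^2+n})$; tracking the constants through the weighted Lipton--Tarjan theorem and the Cauchy--Schwarz step in PSS yields the explicit constant $22$ as stated. The main obstacle I expect is the charging step in the repair stage: bounding how many orphaned lenses a single separator element can certify is precisely where the argument diverges from the standard PSS template, and it is the step that genuinely requires the \emph{separated single-crossing} hypothesis rather than following from general planarization considerations.
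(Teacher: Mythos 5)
A point of reference first: the paper does not actually prove Lemma~\ref{bwcrossing} --- it states the bound and immediately closes it with an end-of-proof box, remarking afterwards that the lemma was proved in \cite{PT} (in the special case where no two edges incident to a common vertex cross) and that the argument there goes through unchanged. So your proposal has to be measured against the argument of \cite{PT}, whose template you have correctly identified: planarize, apply a weighted planar separator theorem, and convert separator vertices back into deleted edges of $G$ via the Cauchy--Schwarz bookkeeping of \cite{PSS}. Conditions (i) and (ii) do come out of that machinery in the standard way, and you are also right that condition (iii) is the only genuinely new difficulty.

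The gap is exactly at the step you flag. Your plan is to let the partition create ``orphaned'' lenses and then repair them by deleting one bounding edge each, with the number of orphaned lenses controlled by a charging argument in which ``each separator element is charged $O(1)$ times.'' No argument is offered for this $O(1)$ bound, and it is far from clear that one exists: a single separator vertex can lie on every path from one witness $w\in V_2$ to the boundaries of many surviving lenses, and conversely an orphaned lens may only be certifiable through vertices buried deep inside other lenses, so the natural charge is many-to-one in both directions. The trivial alternative (orphaned lenses of $G_1$ with non-crossing boundaries have disjoint interiors, hence there are at most $n$ of them) is useless here, since you need the repair cost to be $O(\sqrt{W})$, not $O(n)$. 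The argument of \cite{PT} sidesteps the issue entirely rather than repairing after the fact: the separator is realized as a closed Jordan curve $\gamma$ in the plane meeting the drawing only at vertices of the planarization, and $V_1,V_2$ are taken to be the vertices inside and outside $\gamma$. If a lens of the inner part survives, its boundary is a connected set disjoint from $\gamma$ containing a vertex of $V_1$, hence lies entirely inside $\gamma$; a witness in $V_2$ would then be a point interior to the lens that can be joined to infinity by an arc avoiding $\gamma$, and that arc would have to cross the lens boundary --- a contradiction. With a curve-realized separator, condition (iii) is therefore essentially automatic and no charging is needed. As written, your proposal does not establish the lemma unless the repair step is replaced by this geometric realization of the separator or the charging claim is actually proved.
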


\vspace{0.5cm} 

Lemma \ref{bwcrossing} was proved in \cite{PT}, in the special case where no two edges of $G$ incident to the same vertex cross each other. The argument in \cite{PT} goes through to the general case without any change. In fact, it also remains valid for other ``drawing styles'' (see \cite{KPTU}, Theorem 2).

\begin{proof}[Proof of Corollary \ref{crossinglem}]
Let $G$ be a separated single-crossing topological graph on $n$ vertices with $e$ edges, degree sequence $d_1,\ldots,d_n$, and $c(G)$ crossings.
Similar to (Corollary 2.2 of \cite{PT}), we have $c(G) \geq e-3n$ through an application of Euler's polyhedral formula. This bound is sufficient for $e < 10^{12}n$. We may therefore assume $e \geq 10^{12}n$.

Let $\Delta:=\lceil 2e/n \rceil$. By locally splitting each vertex of $G$ whose degree is larger than $\Delta$ into vertices of degree $\Delta$, with possibly one additional vertex of degree less than $\Delta$, we obtain another separated single-crossing topological graph $G'$ with $e$ edges, $c(G)$ crossings, and with at most $2n$ vertices of maximum degree at most $\Delta:=\lceil 2e/n \rceil$. Denoting the number of vertices of $G'$ by $n'$, we have $n \leq n' \leq 2n$.

Let $k=10^{-10}\frac{e^2}{n^2\log(e/n)}$. Note that $k \geq \Delta$, as $e \geq 10^{12}n$.

Let $\mathcal{F}_0=\{G'\}$. We consider a process where at step $i=1,2,\ldots$ we create a family $\mathcal{F}_i$ of vertex-disjoint subgraphs of $G'$, each of which is a separated single-crossing topological graph. Throughout the process, we maintain the property, true for $i=0$, that each member $H \in \mathcal{F}_i$ either satisfies $c(H) \geq ke(H)$ or has at most $(4/5)^i n'$ vertices. Assume that this property is satisfied at step $i$. At step $i+1$, we construct $\mathcal{F}_{i+1}$ from $\mathcal{F}_i$, as follows. For each $H \in \mathcal{F}_i$, if $c(H) \geq ke(H)$ or $H$ has at most $(4/5)^{i+1} n'$ vertices, then place $H$ in $\mathcal{F}_{i+1}$. Otherwise, we have $c(H) < ke(H)$ and the number of vertices of $H$ is at least $(4/5)^{i+1} n'$ and at most $(4/5)^{i} n'$. In this case, we apply Lemma \ref{bwcrossing} to $H$: by deleting $b(H)$ edges from $H$, we obtain two vertex-disjoint separated single-crossing topological subgraphs, $H_1$ and $H_2$, each of which has at most $\frac{4}{5}v(H)$ vertices. We place $H_1$ and $H_2$ in $\mathcal{F}_{i+1}$. Obviously, the resulting family $\mathcal{F}_{i+1}$ has the desired property.

Next, we bound the number of edges that were deleted from the members of $\mathcal{F}_i$ to obtain $\mathcal{F}_{i+1}$. For each $H \in \mathcal{F}_i$ which does not belong to $\mathcal{F}_{i+1}$, we have $c(H) < ke(H)$. It follows from Lemma \ref{bwcrossing} that the number of edges we deleted from $H$ to obtain $H_1$ and $H_2$ is at most
$$b(H) \leq 22\sqrt{c(H)+\Delta e(H)+v(H)} \leq 22\sqrt{ke(H)+\Delta e(H)+v(H)} \leq 40\left(\sqrt{ke(H)}+\sqrt{v(H)}\right).$$
Since each $H \in \mathcal{F}_i$ which does not belong to $\mathcal{F}_{i+1}$ has at least $(4/5)^{i+1} n'$ vertices, the number of $H$ with this property is at most $n'/\left((4/5)^i n'\right)=(5/4)^{i+1}$. As the sum of $e(H)$ with $H \in \mathcal{F}_i$ is at most $e$, we obtain $\sum_{H \in \mathcal{F}_i \setminus \mathcal{F}_{i+1}} \sqrt{e(H)} \leq \sqrt{(5/4)^{i+1}}\sqrt{e}$. Similarly,
$\sum_{H \in \mathcal{F}_i \setminus \mathcal{F}_{i+1}} \sqrt{v(H)} \leq \sqrt{(5/4)^{i+1}}\sqrt{n'}$. Hence, in going from $\mathcal{F}_i$ to $\mathcal{F}_{i+1}$, at most
$40(5/4)^{(i+1)/2}\left(\sqrt{ke}+\sqrt{n}\right)$ edges were deleted, in total.

We stop at the last step $i_0$ such that $(5/4)^{i_0/2} \leq 10^{-3}(e/k)^{1/2}$. In total, in getting from step $0$ to step $i_0$, the number of edges we have deleted is at most
$$\sum_{i=0}^{i_0-1} 40(5/4)^{(i+1)/2}\left(\sqrt{ke}+\sqrt{n}\right) \leq 500(5/4)^{i_0/2}\sqrt{ke} \leq e/2.$$
Hence, the members of $\mathcal{F}_{i_0}$ contain, in total, at least $e/2$ edges. On the other hand, each graph $H \in \mathcal{F}_{i_0}$ either satisfies $c(H) \geq ke(H)$ or has at most $v:=(4/5)^{i_0}n' \leq 3 \cdot 10^{6}kn/e$ vertices. By Theorem~\ref{main}, each graph $H$ of the latter type has at most $64v^2\log v$ edges. In total, the number of edges in all members $H$ in $\mathcal{F}_{i_0}$ of the latter type is at most $64n'v\log v \leq 2 \cdot 10^9 kn^2e^{-1}\log(n/e) = e/5$. Hence, there are at least $e/2-e/5 = 3e/10$ edges of $G'$ which are in graphs $H$ in $\mathcal{F}_{i_0}$ with $c(H) \geq ke(H)$. Therefore, in these subgraphs altogether we have at least $k(3e/10) \geq 10^{-11}\frac{e^3}{n^2\log(e/n)}$ crossings. All of them are distinct crossings in the original graph $G$, which completes the proof. \end{proof}

\end{document}